\renewcommand\paragraph{\@startsection{paragraph}{4}{\z@}%
            {-2.5ex\@plus -1ex \@minus -.25ex}%
            {1.25ex \@plus .25ex}%
            {\normalfont\normalsize\bfseries}}
\numberwithin{equation}{section}
\newcommand{\Alpha}{A}
\newcommand{\Ker}{\textup{Ker}}
\newcommand{\ri}{\text{ri}}
\newcommand{\Zero}{{\mathbf{0}}}
\newcommand{\RR}{{\mathbb R}}
\newcommand{\cB}{{\mathcal B}}
\newcommand{\cC}{{\mathcal C}}
\newcommand{\cF}{{\mathcal F}}
\newcommand{\cG} {{\mathcal O}_{\mathcal{S}}}
\newcommand{\cOS}{{\mathcal O}_{\mathcal{S}}}
\newcommand{\cO}{{\mathcal O}}
\newcommand{\cP}{{\mathcal P}}
\newcommand{\cQ}{{\mathcal Q}}
\newcommand{\cS}{{\mathcal S}}
\newcommand{\ol}{\overline}
\renewcommand{\Im}{\textup{Im}}
\newcommand{\conv}{\textup{co}}
\newcommand{\cone}{\textup{cone}}
\newcommand{\rank}{\textup{rank}}
\newtheorem{theorem}{Theorem}
\newtheorem{remark}[theorem]{Remark}
\newtheorem{corollary}[theorem]{Corollary}
\newtheorem{problem}[theorem]{Problem}
\newtheorem{lemma}[theorem]{Lemma}
\begin{document}
\sloppy

\title{Some Results on an Affine Obstruction to Reach Control}
\author{Melkior Ornik\footnote{Department of Electrical and Computer Engineering, University of Toronto, Toronto ON Canada M5S 3G4, 
$\{\textrm{melkior.ornik@scg.utoronto.ca, broucke@control.utoronto.ca}\}$}, Mireille~E.~Broucke\footnotemark[1]}

\date{\today}
\maketitle

\begin{abstract}
This ArXiv paper is a supplement to \cite{OrnBro15a} and contains proofs of preliminary claims omitted in \cite{OrnBro15a} for lack of space. The paper deals with exploring a necessary condition for solvability of the Reach Control Problem (RCP) using affine feedback. The goal of the RCP is to drive the states of an affine control system to a given facet of a simplex without first exiting the simplex through other facets. In analogy to the problem of a topological obstruction to the RCP for continuous state feedback studied in \cite{OrnBro15a}, this paper formulates the problem of an affine obstruction and solves it in the case of two- and three-dimensional systems. An appealing geometric cone condition is identified as the new necessary condition.
\end{abstract}

\section{Introduction}
The Reach Control Problem (RCP), first introduced in \cite{HabVan01} and given a modern formulation in \cite{Habetal06, RosBro06}, is a fundamental problem in piecewise affine and hybrid system theory. A reach control approach has been shown to be useful in a number of applications, including aircraft and underwater vehicles \cite{Belta2}, genetic networks \cite{Belta3}, and aggressive maneuvers of mechanical systems \cite{SUPERMARIO}. Nevertheless, for a given system, it is still not known in general whether the RCP is solvable by either affine or continuous state feedback. This paper formulates the problem of an obstruction to solving the RCP by affine state feedback.

Consider an $n$-dimensional simplex $\mathcal{S} \subseteq \mathbb{R}^n$ with vertices $v_0,v_1,\ldots,v_n$. The facets of $\cS$ are denoted by $\mathcal{F}_0,\ldots,\mathcal{F}_n$, where each facet is indexed by the vertex it does not contain. The facet $\cF_0$ is called the {\em exit facet}. 
We consider an affine control system defined on $\cS$, given by
\begin{equation}
\label{syst}
\dot{x} = Ax + Bu + a \,.
\end{equation} 
The RCP asks the following question: Is it possible to find a state feedback $u : \cS \to \mathbb{R}^m$ such that, 
for any initial state $x_0 \in \cS$, the closed-loop trajectory leaves $\cS$ in finite time, 
and it does so by leaving through facet $\cF_0$? Let $\phi_u(t,x_0)$ be the trajectory of system \eqref{syst} under state feedback $u$ and with initial state $x_0$. 

\begin{problem}
\label{prob0}
Is it possible to find $u: \cS \to \mathbb{R}^m$ such that for each $x_0 \in \cS$, there 
exists $T>0$ such that
\begin{enumerate}[(i)]
\item 
$\phi_u(t,x_0) \in \cS$ for all $t\in\left[0,T\right]$,
\item 
$\phi_u(T,x_0) \in \mathcal{F}_0$,
\item 
$\phi_u(t,x_0) \not\in \cS$ for all $t \in \left(T,T+\varepsilon\right)$ for some $\epsilon>0$.
\end{enumerate}
\end{problem}

In this paper we focus on a necessary condition for solvability of the RCP using affine feedback. In particular, for an affine state feedback to solve the RCP, it must not admit any closed-loop equilibria
in $\cS$. Let $\cB=\Im(B)$. It is easily shown that the equilibria of \eqref{syst} can only lie in the affine subspace 
$$
\cO = \{ x \in \mathbb{R}^n ~|~ Ax+a \in \cB \} \,.
$$ 
As we are only interested in potential equilibria contained in $\cS$, we study the set 
$$
\cOS = \mathcal{S} \cap \cO \,.
$$

We are interested in seeing whether we can design an affine feedback on $\cOS$ satisfying the conditions of Problem \ref{prob0}. This is clearly a necessary condition for solvability of the RCP.

For each $x\in\mathcal{S}$, we define the inside pointing cone $\mathcal{C}(x)$ with respect to $\mathcal{S}$ by 
\begin{equation}
\label{cone}
\mathcal{C}(x)=\{y\in\mathbb{R}^n|h_j\cdot y\leq 0 \textrm{ for all } j\in\{1,\ldots,n\}\backslash I(x)\}\textrm{,}
\end{equation} where $h_j$ is an outward pointing normal to $\cF_j$, and $I(x)=\{i_1,i_2,\ldots,i_k\}$ is the minimal set of indices of vertices of $\mathcal{S}$ such that $x\in\mathrm{co}\{v_{i_1},\ldots,v_{i_k}\}$. In other words, $x$ is in the interior of $x\in\mathrm{co}\{v_{i_1},\ldots,v_{i_k}\}$. 

$\mathcal{C}(x)$ contains all vector directions that, when appended to $x$, point inside $\cS$ or through $\cF_0$. Hence, by the conditions of Problem \ref{prob0}, $f(x)=Ax+Bu(x)+a\in\mathcal{C}(x)$ for all $x\in\cS$ is a necessary condition for solvability of the RCP. If $u$ is an affine state feedback, $f$ is affine as well. We also note that for $x\in\cOS$, $Ax+a\in\mathcal{B}$, and hence, $f(x)\in\mathcal{B}$ for $x\in\cOS$.

Thus, given the above observations, we want to study the following problem:

\begin{problem}
\label{proa}
Let $\cOS$, $\mathcal{S}$ and $\mathcal{B}$ be as above. Does there exist an affine map $f:\cOS\to\mathcal{B}$ that satisfies
\begin{enumerate}[(i)]
\item $f(x)\in\mathcal{C}(x)$ for all $x\in\cOS$,
\item $f(x)\neq 0$ for all $x\in\cOS$?
\end{enumerate}
\end{problem}

The second condition implies that the system given by $\dot{x}=f(x)$ contains no equilibria in $\cS$. This problem has a continuous analogue studied in \cite{OrnBro15a, OrnBro15b}. The continuous analogue is referred to as the {\em topological obstruction problem in the RCP}, as stated below.

\begin{problem}
\label{proc}
Let $\cOS$, $\mathcal{S}$ and $\mathcal{B}$ be as above. Does there exist a continuous map $f:\cOS\to\mathcal{B}\backslash\{0\}$ such that, for every $x\in\cOS$, $f(x)\in\mathcal{C}(x)$?
\end{problem}

Section \ref{prel} of this paper contains preliminary results for several special cases. These apply equally to Problem \ref{proa} and to Problem \ref{proc}. Hence, this paper serves as a supplement to \cite{OrnBro15a}, providing simple proofs omitted in \cite{OrnBro15a} for lack of space. 
In Section \ref{aff}, we provide a solution to Problem \ref{proa} in the cases of $n=2$ and $n=3$ 
using a linear algebra approach.

\section{Preliminaries}
\label{prel}
In this section we introduce a sufficient condition for solvability of Problems \ref{proa} and \ref{proc}. Furthermore, we investigate the cases of $\dim\cG=0$ and $\dim\cG=n$. All of the following results apply both to Problem \ref{proa} and \ref{proc}. 

\begin{lemma}[Vertex Deletion]
\label{cla}
Let $I(p)=\{0,i_1,i_2,\ldots,i_k\}$, with $k\geq 0$. Furthermore, let $I(q)=\{i_1,i_2,\ldots,i_k,i_{k+1},\ldots,v_{i_l}\}$, where $l\geq k$. We take all $i_j$'s to be different, and all different from $0$. Then $\mathcal{C}(p)\subseteq\mathcal{C}(q)$.
\end{lemma}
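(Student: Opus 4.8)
The plan is to reduce the claimed cone inclusion to a purely set-theoretic comparison of the index sets over which the defining inequalities range. By definition \eqref{cone}, the cone $\mathcal{C}(x)$ is exactly the solution set of the half-space constraints $h_j\cdot y\leq 0$ indexed by $J(x):=\{1,\ldots,n\}\setminus I(x)$. Since imposing more constraints can only shrink the solution set, it suffices to prove that $J(q)\subseteq J(p)$: every $y$ satisfying all the constraints defining $\mathcal{C}(p)$ then automatically satisfies the (fewer) constraints defining $\mathcal{C}(q)$, giving $\mathcal{C}(p)\subseteq\mathcal{C}(q)$.

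First I would emphasize the one feature of \eqref{cone} that makes the lemma work: the constraint index $j$ ranges only over $\{1,\ldots,n\}$ and never takes the value $0$, reflecting that $\mathcal{F}_0$ is the exit facet and therefore imposes no inside-pointing constraint. Consequently the presence of $0$ in $I(p)$ is invisible to $J(p)$: we have $J(p)=\{1,\ldots,n\}\setminus\{i_1,\ldots,i_k\}$, exactly as though $0$ were absent from $I(p)$. Likewise $J(q)=\{1,\ldots,n\}\setminus\{i_1,\ldots,i_l\}$. From the hypotheses (the $i_j$ are distinct and none equals $0$, and $l\geq k$) we get $\{i_1,\ldots,i_k\}\subseteq\{i_1,\ldots,i_l\}$, and complementing within $\{1,\ldots,n\}$ reverses this inclusion to yield $J(q)\subseteq J(p)$, which is precisely what the reduction above requires.

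There is no genuine obstacle in this argument: the entire substance of the lemma is the observation that the vertex index $0$ may be deleted from $I(p)$ at no cost, since it never indexes a cone constraint, which is exactly what the name \emph{Vertex Deletion} records. The only point demanding any care is bookkeeping — tracking that complementation within $\{1,\ldots,n\}$ reverses set inclusions, so that enlarging the nonzero part of the vertex index set enlarges, rather than shrinks, the associated cone, and that the minimality of $I(p)$ and $I(q)$ is not needed beyond fixing what these sets are.
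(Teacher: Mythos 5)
Your proposal is correct and follows essentially the same route as the paper's proof: both unfold the definition \eqref{cone} to see that the constraints defining $\mathcal{C}(q)$ are a subset of those defining $\mathcal{C}(p)$, so the inclusion of cones follows immediately. Your explicit remark that the index $0$ never appears among the constraint indices $\{1,\ldots,n\}$ is a helpful clarification of a point the paper leaves implicit, but it does not change the argument.
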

\begin{proof}
By the definition of $\mathcal{C}$ in \eqref{cone}, 
$$\mathcal{C}(p)=\{y\in\mathbb{R}^n|h_j\cdot y\leq 0 \textrm{ for all } j\in\{1,\ldots,n\}\backslash\{0,i_1,i_2,\ldots,i_k\}\}\textrm{.}$$ 
On the other hand,
$$\mathcal{C}(q)=\{y\in\mathbb{R}^n|h_j\cdot y\leq 0 \textrm{ for all } j\in\{1,\ldots,n\}\backslash\{i_1,i_2,\ldots,i_k,i_{k+1},\ldots,i_l\}\}\textrm{.}$$
Thus, as it is clear that the set of constraints in $\mathcal{C}(q)$ is a subset of the set of constraints in $\mathcal{C}(p)$, $\mathcal{C}(p)\subseteq\mathcal{C}(q)$.
\end{proof}

\begin{remark}
From the proof, it is clear that it does not matter if $I(p)$ includes $0$ or not. Analogously, it does not matter if $q$ is in the convex hull of vertices that include $v_0$ or not.
\end{remark}

The above lemma can now be used to show that cones of points on the interior of a polytope in $\cG$ are less restrictive than cones of points at its boundary. This is given in Lemma \ref{ter}, and such a claim will be useful both for Problem \ref{proa} discussed in Section \ref{aff}, as well as in Problem \ref{proc} discussed in \cite{OrnBro15a}.

\begin{lemma}
\label{ter}
Let $\mathcal{H}\subseteq\cOS$ be a polytope, and let $x$ be any point in its interior: $x\in\mathrm{Int}(\mathcal{H})$. Also, let $y$ be any point on its boundary: $y\in\partial\mathcal{H}$. Then, $\mathcal{C}(y)\subseteq\mathcal{C}(x)$.
\end{lemma}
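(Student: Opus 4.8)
The plan is to reduce the claim to the Vertex Deletion Lemma (Lemma \ref{cla}) by establishing the index inclusion $I(y) \subseteq I(x)$, and then to prove that inclusion using the affineness of barycentric coordinates together with a convexity argument exploiting that $x$ lies in the interior.

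First I would record the precise meaning of the index set $I(\cdot)$. Writing each point $z \in \mathcal{S}$ in barycentric coordinates $z = \sum_{i=0}^{n} \lambda_i(z)\, v_i$ with $\lambda_i(z) \geq 0$ and $\sum_i \lambda_i(z) = 1$, the affine independence of $v_0,\ldots,v_n$ forces this representation to be unique and makes every sub-collection $\{v_i : i \in J\}$ affinely independent as well. Hence $z \in \mathrm{co}\{v_i : i \in J\}$ holds if and only if $\lambda_i(z) = 0$ for every $i \notin J$, so the minimal such $J$ — which is exactly $I(z)$ — equals $\{i : \lambda_i(z) > 0\}$. Each coordinate map $\lambda_i$ is affine on $\mathbb{R}^n$, which is the property I will lean on.

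The crux is then to prove $I(y) \subseteq I(x)$. Since $x \in \mathrm{Int}(\mathcal{H})$ and $y \in \partial\mathcal{H} \subseteq \mathcal{H}$ with $y \neq x$, the segment from $y$ to $x$ can be prolonged slightly past $x$ while staying in $\mathcal{H}$: there exist a point $y' \in \mathcal{H} \subseteq \mathcal{S}$ and a scalar $\mu \in (0,1)$ with $x = \mu y + (1-\mu) y'$. Applying the affine map $\lambda_i$ gives $\lambda_i(x) = \mu\,\lambda_i(y) + (1-\mu)\,\lambda_i(y')$, and since both $y$ and $y'$ lie in $\mathcal{S}$ their barycentric coordinates are non-negative. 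Thus whenever $\lambda_i(y) > 0$ we obtain $\lambda_i(x) \geq \mu\,\lambda_i(y) > 0$. By the characterization above this is exactly the statement $I(y) \subseteq I(x)$.

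Finally, $I(y) \subseteq I(x)$ yields in particular $I(y)\setminus\{0\} \subseteq I(x)\setminus\{0\}$, so Lemma \ref{cla} — together with the remark following it, which frees us from having to track the index $0$ — applies with $p = y$ and $q = x$ and delivers $\mathcal{C}(y) \subseteq \mathcal{C}(x)$. I do not anticipate a serious obstacle; the only points that genuinely need care are the identification of $I(z)$ with the support of the barycentric coordinates and the verification that the prolonged point $y'$ still lies in $\mathcal{S}$, so that $\lambda_i(y') \geq 0$. If $\mathcal{H}$ happens to be lower-dimensional, the same argument runs verbatim with $\mathrm{Int}$ interpreted as relative interior, since the segment prolongation only requires $x$ to be relatively interior to $\mathcal{H}$.
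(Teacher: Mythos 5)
Your proof is correct and takes essentially the same route as the paper's: both arguments prolong the segment from $y$ through $x$ to another point of $\mathcal{H}$, write $x$ as a strict convex combination of $y$ and that point, compare supports of the (nonnegative) barycentric coordinates to conclude $I(y)\subseteq I(x)$, and then invoke the Vertex Deletion Lemma. The differences are cosmetic --- the paper extends the line all the way to a second boundary point $z\in\partial\mathcal{H}$ rather than to a nearby $y'\in\mathcal{H}$, and your explicit note that ``interior'' must be read as relative interior (since $\mathcal{H}\subseteq\cOS$ is typically lower-dimensional) is a clarification the paper leaves implicit.
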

\begin{proof}
Consider the line going through points $x$ and $y$. As $x\notin\partial\mathcal{H}$, by extending that line past $x$, we can determine a point $z\in\partial\mathcal{H}$ such that $x=\alpha y+\beta z$, where $\alpha,\beta>0$, $\alpha+\beta=1$. Let us assume that $y=\sum_{i=0}^n \alpha_i v_i$, $z=\sum_{i=0}^n \beta_iv_i$. Since both $y$ and $z$ are in $\cOS\subseteq\mathcal{S}$, all $\alpha_i$'s and $\beta_i$'s are nonnegative. Then, $x=\sum_{i=0}^n (\alpha\alpha_i+\beta\beta_i)v_i$. Now, for any $i$, if $i\in I(y)$, then $\alpha_i\neq 0$. We notice that, in that case, no matter what $\beta$, $\beta_i$ and $\alpha$ are, $\alpha\alpha_i+\beta\beta_i>0$. Thus, $i\in I(x)$. In other words, $I(y)\subseteq I(x)$ and thus, by Lemma \ref{cla}, $\mathcal{C}(y)\subseteq\mathcal{C}(x)$.
\end{proof}

From now on, we will use the following notation: $$\mathrm{cone}(\cOS)=\bigcap_{x\in\cOS}\mathcal{C}(x)\textrm{.}$$ We note that by Lemma \ref{ter} $$\mathrm{cone}(\cOS)=\bigcap_{i=1}^r\mathcal{C}(o_i)\textrm{,}$$ where $o_1,\ldots,o_r$ are vertices of $\cOS$.

The following result provides a sufficient condition for solving Problems~\ref{proa} and \ref{proc}. Our discussion in Section \ref{aff} will show that this condition is not necessary in general. However, it holds a central position in treatment of a number of subcases when solving Problems \ref{proa} and \ref{proc} for $n=2,3$.

\begin{lemma}
\label{suf}
If $$\mathrm{cone}(\cOS)\cap\mathcal{B}\neq\{0\}\textrm{,}$$ then the answer to Problems \ref{proa} and \ref{proc} is affirmative.
\end{lemma}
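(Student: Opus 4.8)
The plan is to produce a single explicit map that answers both problems at once, exploiting the fact that $\mathrm{cone}(\cOS)$ was defined as the intersection of the cones $\mathcal{C}(x)$ over all $x\in\cOS$. By the hypothesis there exists a vector $w\in\mathrm{cone}(\cOS)\cap\mathcal{B}$ with $w\neq 0$; I would fix such a $w$ and define the constant map $f:\cOS\to\mathcal{B}$ by $f(x)=w$ for every $x\in\cOS$. I claim this one map resolves Problems \ref{proa} and \ref{proc} simultaneously.

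The verification then proceeds by unwinding the definitions. First, $f$ is a legitimate candidate for both problems: a constant map is affine (write $f(x)=0\cdot x+w$) and therefore continuous, and its image $\{w\}$ is contained in $\mathcal{B}$ because $w\in\mathcal{B}$. Second, condition (i) holds because $w\in\mathrm{cone}(\cOS)=\bigcap_{x\in\cOS}\mathcal{C}(x)$ forces $w\in\mathcal{C}(x)$ for every $x\in\cOS$, that is, $f(x)=w\in\mathcal{C}(x)$. Third, condition (ii) holds trivially since $f(x)=w\neq 0$ for all $x$, so the closed-loop system $\dot{x}=f(x)$ has no equilibria in $\cOS$; the same inequality shows that $f$ in fact maps into $\mathcal{B}\setminus\{0\}$, as Problem \ref{proc} requires.

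There is no substantial obstacle here: the entire content of the lemma is the observation that a nonzero element of the common intersection $\mathrm{cone}(\cOS)$ is, by construction, an admissible direction \emph{simultaneously} at every point of $\cOS$, and that a constant function is at once affine and continuous, so a single witness settles both problems together. The only point worth flagging is that this sufficient condition is phrased via $\mathrm{cone}(\cOS)$ rather than a pointwise check; by the earlier reduction through Lemma \ref{ter}, the intersection need only be taken over the finitely many vertices $o_1,\ldots,o_r$ of $\cOS$, which makes the hypothesis effectively verifiable.
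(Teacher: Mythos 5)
Your proposal is correct and takes essentially the same route as the paper: both proofs fix a nonzero vector $b$ (your $w$) in $\mathrm{cone}(\cOS)\cap\mathcal{B}$ and observe that the constant map $f(x)=b$, being simultaneously affine and continuous with values in $\mathcal{B}\setminus\{0\}$ and lying in every cone $\mathcal{C}(x)$ by definition of $\mathrm{cone}(\cOS)$, settles Problems \ref{proa} and \ref{proc} at once. Your write-up merely spells out the verification in more detail than the paper does.
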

\begin{proof}
Let $b\in\mathrm{cone}(\cOS)\cap\mathcal{B}\backslash\{0\}$. We note that, by definition of $\mathrm{cone}(\cOS)$, $b$ satisfies the inward-pointing condition at every point in $\cOS$. Thus, the function $f:\cOS\to\mathcal{B}\backslash\{0\}$ defined by $f(x)=b$ for all $x\in\cOS$ satisfies all the criteria of Problem \ref{proa} and of Problem \ref{proc}.
\end{proof}

As a dual of sorts to Lemma \ref{suf}, Lemma \ref{nec} gives an easy necessary condition for Problems \ref{proa} and \ref{proc}.

\begin{lemma}
\label{nec}
Assume that the function $f$ from Problem \ref{proa} (Problem \ref{proc}) exists. Then, for every $x\in\cOS$, there exists $0\neq b\in\mathcal{C}(x)\cap\mathcal{B}$.
\end{lemma}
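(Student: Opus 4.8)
The plan is to derive the conclusion directly from the hypothesis that a valid $f$ exists. Suppose $f:\cOS\to\mathcal{B}$ satisfies the conditions of Problem \ref{proa} (the argument for Problem \ref{proc} is identical since both require $f(x)\in\mathcal{C}(x)$ and $f(x)\neq 0$). The key observation is that the two required properties — $f(x)\in\mathcal{C}(x)$ and $f(x)\neq 0$ — combine with the ambient constraint $f(x)\in\mathcal{B}$ to give exactly the desired witness at each point.

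First I would fix an arbitrary $x\in\cOS$ and set $b:=f(x)$. Since $f$ maps into $\mathcal{B}$, we immediately have $b\in\mathcal{B}$. By condition (i) of Problem \ref{proa}, $f(x)\in\mathcal{C}(x)$, so $b\in\mathcal{C}(x)$. Hence $b\in\mathcal{C}(x)\cap\mathcal{B}$. Finally, condition (ii) gives $f(x)\neq 0$, so $b\neq 0$. This exhibits the required nonzero element $0\neq b\in\mathcal{C}(x)\cap\mathcal{B}$, and since $x$ was arbitrary, the conclusion holds for every $x\in\cOS$.

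There is essentially no obstacle here: the statement is an immediate pointwise unpacking of the definitions, and the only thing to be careful about is noting that the containment $f(x)\in\mathcal{B}$ is guaranteed for points of $\cOS$ (as remarked in the setup preceding Problem \ref{proa}, where $Ax+a\in\mathcal{B}$ forces $f(x)\in\mathcal{B}$ on $\cOS$). For Problem \ref{proc} the map lands in $\mathcal{B}\backslash\{0\}$ by hypothesis, so the same $b=f(x)$ works verbatim. Thus the lemma is a one-line consequence of the problem formulations, and no genuinely hard step arises.
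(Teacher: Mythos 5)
Your proof is correct and is essentially identical to the paper's own argument: both take $b = f(x)$ and observe that the conditions of Problem \ref{proa} (or \ref{proc}) immediately give $b \in \mathcal{C}(x) \cap \mathcal{B}$ and $b \neq 0$.
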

\begin{proof}
For any such $x$, take $b=f(x)$. By the conditions of Problems \ref{proa} and \ref{proc}, $f(x)\in\mathcal{B}\backslash\{0\}$ and $f(x)\in\mathcal{C}(x)$.
\end{proof}

Finally, let us note that $\cOS$ is a manifold (in fact, a polytope) of dimension $0\leq \dim\cOS \leq n$. Cases $\dim\cOS=0$ and $\dim\cOS=n$, as well as the case of $v_0\in\cOS$, prove to be particularly easy to analyze. We do that as follows:

\begin{lemma}
\label{nul}
If $\dim{\cOS}=0$, the answer to Problems \ref{proa} and \ref{proc} is affirmative if and only if $$\mathrm{cone}(\cOS)\cap\mathcal{B}\neq\Zero\textrm{.}$$
\end{lemma}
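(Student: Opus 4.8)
The plan is to exploit the fact that a nonempty polytope of dimension $0$ is a single point. First I would write $\cOS=\{p\}$. With the domain reduced to a singleton the notational machinery collapses: the intersection defining $\mathrm{cone}(\cOS)=\bigcap_{x\in\cOS}\mathcal{C}(x)$ runs over the single point $p$, so $\mathrm{cone}(\cOS)=\mathcal{C}(p)$. Moreover, an affine map $f:\cOS\to\mathcal{B}$ and a continuous map $f:\cOS\to\mathcal{B}\backslash\{0\}$ are both nothing more than a choice of a single vector $b:=f(p)\in\mathcal{B}$; in particular the distinction between Problem \ref{proa} and Problem \ref{proc} disappears entirely, and both reduce to asking whether there is a nonzero $b\in\mathcal{B}\cap\mathcal{C}(p)=\mathcal{B}\cap\mathrm{cone}(\cOS)$.

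For the ``if'' direction I would simply invoke Lemma \ref{suf}: if $\mathrm{cone}(\cOS)\cap\mathcal{B}\neq\Zero$, then picking any nonzero $b$ in this intersection and setting $f\equiv b$ produces a valid (constant, hence affine and continuous) solution to both problems. No new argument is needed here, since Lemma \ref{suf} is already stated for arbitrary $\cOS$.

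For the ``only if'' direction I would appeal to Lemma \ref{nec}. Suppose the desired $f$ exists. Applying Lemma \ref{nec} at the point $x=p$ yields a vector $0\neq b\in\mathcal{C}(p)\cap\mathcal{B}$. Since $\mathcal{C}(p)=\mathrm{cone}(\cOS)$, this $b$ witnesses $\mathrm{cone}(\cOS)\cap\mathcal{B}\neq\Zero$, which is exactly the claimed condition.

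The argument is essentially immediate once the singleton reduction is in place, so there is no genuine obstacle; the only point requiring care is the observation that, on a zero-dimensional domain, ``affine'' and ``continuous'' impose no constraint beyond membership in the target set, so Lemmas \ref{suf} and \ref{nec} --- which in general are only a sufficient and a necessary condition respectively --- here coincide and yield the stated equivalence.
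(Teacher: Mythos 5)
Your proof is correct and follows essentially the same route as the paper's: reduce $\cOS$ to a single point so that $\mathrm{cone}(\cOS)=\mathcal{C}(p)$, then obtain sufficiency from Lemma \ref{suf} and necessity from Lemma \ref{nec}. The additional remark that affine and continuous maps on a singleton both reduce to choosing one vector is a nice clarification but not a departure from the paper's argument.
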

\begin{proof}
We note that in this case, $\cOS$ consists of a single point $x\in\mathcal{S}$. Thus, $\mathrm{cone}(\cOS)=\mathcal{C}(x)$, sufficiency is proved by Lemma \ref{suf}, and necessity is proved by Lemma \ref{nec}.
\end{proof}

\begin{lemma}
\label{tec}
If $v_0\in\cOS$, then $\mathrm{cone}(\cOS)=\mathcal{C}(v_0)$ and the answer to Problems \ref{proa} and \ref{proc} is affirmative if and only if $$\mathrm{cone}(\cOS)\cap\mathcal{B}=\mathcal{C}(v_0)\cap\mathcal{B}\neq\Zero\textrm{.}$$
\end{lemma}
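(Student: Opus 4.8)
The plan is to first establish the structural identity $\mathrm{cone}(\cOS) = \mathcal{C}(v_0)$, and then to read off the biconditional exactly as in the proof of Lemma \ref{nul}. The whole argument is an assembly of the earlier lemmas; the one idea that does the work is that the cone at the vertex $v_0$ is the most restrictive cone over all of $\mathcal{S}$.

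For the structural identity, I would start from the observation that $v_0$ is a vertex of $\mathcal{S}$, so its minimal index set is $I(v_0) = \{0\}$. By the definition \eqref{cone}, the constraint set of $\mathcal{C}(v_0)$ is then $\{1,\ldots,n\}\setminus\{0\} = \{1,\ldots,n\}$; that is, $\mathcal{C}(v_0)$ is subject to \emph{all} of the normals $h_1,\ldots,h_n$. For any other $x\in\mathcal{S}$ the constraint set $\{1,\ldots,n\}\setminus I(x)$ is a subset of $\{1,\ldots,n\}$, so $\mathcal{C}(v_0)$ carries at least as many constraints as $\mathcal{C}(x)$, giving $\mathcal{C}(v_0)\subseteq\mathcal{C}(x)$. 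This is precisely Lemma \ref{cla} together with its following remark, applied with $p = v_0$; the point to emphasize is that the exit index $0$ never appears in any constraint set, which is exactly why the vertex $v_0$ produces the smallest cone.

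With this monotonicity established I would assemble the equality. Since $v_0\in\cOS$ by hypothesis, the factor $\mathcal{C}(v_0)$ occurs in $\mathrm{cone}(\cOS)=\bigcap_{x\in\cOS}\mathcal{C}(x)$, so $\mathrm{cone}(\cOS)\subseteq\mathcal{C}(v_0)$. Conversely, because $\mathcal{C}(v_0)\subseteq\mathcal{C}(x)$ for every $x\in\cOS$, we get $\mathcal{C}(v_0)\subseteq\bigcap_{x\in\cOS}\mathcal{C}(x)=\mathrm{cone}(\cOS)$. The two inclusions yield $\mathrm{cone}(\cOS)=\mathcal{C}(v_0)$.

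The biconditional then follows with no new work. For sufficiency, assuming $\mathcal{C}(v_0)\cap\mathcal{B}\neq\Zero$, the identity just proved gives $\mathrm{cone}(\cOS)\cap\mathcal{B}\neq\Zero$, and Lemma \ref{suf} returns an affirmative answer to both problems. For necessity, assuming the desired $f$ exists, I would apply Lemma \ref{nec} at the point $x=v_0\in\cOS$ to obtain some $0\neq b\in\mathcal{C}(v_0)\cap\mathcal{B}$, whence $\mathcal{C}(v_0)\cap\mathcal{B}\neq\Zero$. I do not expect a genuine obstacle: the only step requiring care is recognizing that the exit-facet index $0$ is excluded from every constraint set, so that $v_0$ carries the smallest cone and dominates the intersection; once that is noted, the claim reduces to the previously proved Lemmas \ref{suf} and \ref{nec}.
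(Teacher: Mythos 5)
Your proposal is correct and follows essentially the same route as the paper: the paper likewise invokes the Vertex Deletion Lemma (Lemma \ref{cla}) to get $\mathcal{C}(v_0)\subseteq\mathcal{C}(x)$ for all $x\in\cOS$, combines the two inclusions to conclude $\mathrm{cone}(\cOS)=\mathcal{C}(v_0)$, and then cites Lemma \ref{suf} for sufficiency and Lemma \ref{nec} for necessity. Your only addition is spelling out why $I(v_0)=\{0\}$ makes $\mathcal{C}(v_0)$ the most constrained cone, which the paper leaves implicit in its appeal to Lemma \ref{cla}.
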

\begin{proof}
Sufficiency is proved by Lemma \ref{suf}. Now, by the Vertex Deletion Lemma, $\mathcal{C}(v_0)\subseteq\mathcal{C}(x)$ for all $x\in\cOS$. Thus, $$\mathcal{C}(v_0)\supseteq\mathrm{cone}(\cOS)=\bigcap_{x\in\cOS}\mathcal{C}(x)\supseteq\mathrm{cone}(v_0)\textrm{.}$$ So, $\mathrm{cone}(\cOS)=\mathcal{C}(v_0)$, and necessity thus follows from Lemma \ref{nec}.
\end{proof}

\begin{corollary}
\label{ful}
If $\dim{\cOS}=n$, then $\mathrm{cone}(\cOS)=\mathcal{C}(v_0)$ and the answer to Problems \ref{proa} and \ref{proc} is affirmative if and only if $\mathrm{cone}(\cOS)\cap\mathcal{B}=\mathcal{C}(v_0)\cap\mathcal{B}\neq\Zero$.
\end{corollary}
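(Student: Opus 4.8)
The plan is to reduce the full-dimensional case to the already-settled situation $v_0 \in \cOS$ of Lemma \ref{tec}, by showing that $\dim \cOS = n$ forces $\cOS$ to be the entire simplex $\mathcal{S}$. The only real content is a dimension-counting observation; once it is in place, the identity $\mathrm{cone}(\cOS) = \mathcal{C}(v_0)$ and the stated equivalence follow verbatim from Lemma \ref{tec}.

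First I would recall that $\cOS = \mathcal{S} \cap \cO$, where $\cO = \{ x \in \mathbb{R}^n \mid Ax + a \in \cB \}$ is an affine subspace of $\mathbb{R}^n$. Since $\cOS \subseteq \cO$ and $\cO$ coincides with its own affine hull, we have $\aff(\cOS) \subseteq \cO$. The hypothesis $\dim \cOS = n$ means $\dim \aff(\cOS) = n$, whence $\dim \cO \geq n$, forcing $\cO = \mathbb{R}^n$. Consequently $\cOS = \mathcal{S} \cap \mathbb{R}^n = \mathcal{S}$, and in particular $v_0 \in \cOS$.

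With $v_0 \in \cOS$ established, I would invoke Lemma \ref{tec} directly: it yields $\mathrm{cone}(\cOS) = \mathcal{C}(v_0)$ and asserts that Problems \ref{proa} and \ref{proc} are solvable if and only if $\mathrm{cone}(\cOS) \cap \mathcal{B} = \mathcal{C}(v_0) \cap \mathcal{B} \neq \Zero$, which is precisely the claim. Because the argument is essentially a one-line reduction, I do not expect a genuine obstacle; the only point requiring a moment's care is justifying that a full-dimensional subset of an affine subspace of $\mathbb{R}^n$ can only be contained in the trivial affine subspace $\mathbb{R}^n$ itself, which is exactly the dimension inequality used above.
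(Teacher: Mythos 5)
Your proposal is correct and matches the paper's proof: both reduce to Lemma \ref{tec} by observing that $\dim\cOS = n$ forces $\cOS = \mathcal{S}$, hence $v_0 \in \cOS$. The only difference is that you spell out the dimension-counting justification (that $\cO$ must then be all of $\mathbb{R}^n$), which the paper states without elaboration.
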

\begin{proof}
We note that $\dim{\cOS}=n$ implies $\cOS=\mathcal{S}\ni v_0$. The claim follows from Lemma \ref{tec}.
\end{proof}

In the remainder of the text, as well as in \cite{OrnBro15a}, we assume that $1\leq\dim\cOS\leq n-1$.

%

\section{Affine Case}
\label{aff}

This section contains the main contribution of this paper: we will solve Problem \ref{proa} in the case of $n=2,3$. We will do that on a case by case basis, employing methods from linear algebra. We note that the results from the previous section solved the cases of $\dim \cOS\in\{0,n\}$. This reduces the problem to $\dim\cOS=1$ and $\dim\cOS=2$ (when $n=3$). In both of these, the sufficient condition $$\cone(\cOS)\cap\cB\neq\Zero$$ from Lemma \ref{suf} will again make an appearance, as it will be shown that, depending on the case, Problem \ref{proa} is either always solvable, or the condition from Lemma \ref{suf} is a necessary condition.

\subsection{$n=2$}
We note that the case where $\dim\cOS\in\{0,2\}$ has been solved in Lemma \ref{nul} and Corollary \ref{ful}. Thus, the only remaining case is when $\dim\cOS=\dim\mathcal{B}=1$. However, this was covered in Theorem 1 of \cite{SemBro14}: the same Intermediate Value Theorem argument holds for both continuous and affine functions. Thus, $f$ from Problem \ref{proa} exists if and only if $\mathrm{cone}(\cOS)\cap\mathcal{B}\neq\Zero$.

\subsection{$n=3$}
Again, the cases in which $\dim\cOS\in\{0,3\}$ have been solved in Lemma \ref{nul} and Corollary \ref{ful}. Thus, the remaining cases are $\dim\cOS,\dim\cB\in\{1,2\}$. 

$\dim\cOS=\dim\mathcal{B}=1$ is, as above, covered in \cite{SemBro14}. Problem \ref{proa} is again solvable if and only if $\mathrm{cone}(\cOS)\cap\mathcal{B}\neq\Zero$.

\subsubsection{$\dim\cOS=1$, $\dim\mathcal{B}=2$}
The following two lemmas were stated and proved in \cite{OrnBro15a}. For the benefit of the reader, we repeat the proofs. Lemma \ref{l22} solves Problem \ref{proa} in the case of $\dim\cOS=1$ and $\dim\mathcal{B}=2$.

\begin{lemma}
\label{l12}
Suppose $\cG = \conv \{ o_1,\ldots,o_{\kappa+1} \}$ where the $o_i$'s are the vertices of $\cG$.
If there exists a linearly independent set $\{ b_i \in \mathcal{B} \cap \mathcal{C}(o_i) ~|~ i = 1,\ldots,\kappa+1 \}$, 
then the answer to Problems~\ref{proa} and \ref{proc} is affirmative. 
\end{lemma}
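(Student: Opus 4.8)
The plan is to build a single affine vector field on $\cG$ directly from the data $\{b_i\}$, interpolating the prescribed directions across the simplex via barycentric coordinates. Since $\cG$ is the convex hull of its $\kappa+1$ vertices, which are affinely independent (so that $\cG$ is a $\kappa$-simplex), every $x\in\cG$ admits unique barycentric coordinates $\lambda_1(x),\ldots,\lambda_{\kappa+1}(x)\geq 0$ with $\sum_i\lambda_i(x)=1$ and $x=\sum_i\lambda_i(x)o_i$, and each $\lambda_i$ is an affine function of $x$. I would then define
\begin{equation*}
f(x)=\sum_{i=1}^{\kappa+1}\lambda_i(x)\,b_i\,.
\end{equation*}
Being a combination of affine scalar functions with constant vectors, $f$ is affine (hence continuous), and since each $b_i\in\cB$ and $\cB$ is a subspace, $f$ maps $\cG$ into $\cB$. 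It therefore remains to verify the two defining conditions of Problems \ref{proa} and \ref{proc}: that $f(x)\in\mathcal{C}(x)$ and that $f(x)\neq 0$ for every $x\in\cG$.

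For the inclusion $f(x)\in\mathcal{C}(x)$, the key observation is that $\mathcal{C}(x)$ is a polyhedral (hence convex) cone, so it is closed under nonnegative combinations; thus it suffices to show $b_i\in\mathcal{C}(x)$ for every index $i$ with $\lambda_i(x)>0$. Fix such an $x$ and let $S=\{i:\lambda_i(x)>0\}$. Then $x$ lies in the relative interior of the face $\conv\{o_i:i\in S\}$, while each $o_i$ with $i\in S$ lies on its boundary. Applying Lemma \ref{ter} to this sub-polytope of $\cG$ gives $\mathcal{C}(o_i)\subseteq\mathcal{C}(x)$ for every $i\in S$, and since $b_i\in\cB\cap\mathcal{C}(o_i)$ by hypothesis, we obtain $b_i\in\mathcal{C}(x)$. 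Consequently $f(x)=\sum_{i\in S}\lambda_i(x)b_i$ is a nonnegative combination of vectors in the cone $\mathcal{C}(x)$, so $f(x)\in\mathcal{C}(x)$. The degenerate case $|S|=1$ is immediate, since there $x=o_i$ and $f(x)=b_i\in\mathcal{C}(o_i)$.

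For nonvanishing, I would use the linear independence of $\{b_i\}$ together with the normalization $\sum_i\lambda_i(x)=1$: because the coordinates sum to $1$, they are not all zero, so $\sum_i\lambda_i(x)b_i=0$ would contradict linear independence; hence $f(x)\neq 0$. This establishes both conditions, and as $f$ is affine it settles Problem \ref{proa}, while its continuity together with $f(x)\in\cB\backslash\{0\}$ settles Problem \ref{proc}.

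I expect the main subtlety to be the verification that $f(x)\in\mathcal{C}(x)$ at points $x$ lying on lower-dimensional faces, where only a proper subset of the $b_i$ contribute; this is exactly where Lemma \ref{ter} does the essential work, converting the pointwise cone memberships at the vertices into a membership valid throughout the supporting face. A minor point worth flagging is that the construction presumes the vertices $o_i$ to be affinely independent, so that the barycentric coordinates are well defined and affine; this is precisely what is guaranteed by $\cG$ being a simplex with $\kappa+1$ vertices, and it is also what makes the linear independence of the $\kappa+1$ vectors $b_i$ compatible with prescribing $f(o_i)=b_i$.
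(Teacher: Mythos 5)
Your proof is correct and takes essentially the same approach as the paper: the paper defines exactly the same barycentric interpolation $f\left(\sum_i \alpha_i o_i\right) = \sum_i \alpha_i b_i$, obtains $f(x)\neq 0$ from linear independence of the $b_i$'s, and dismisses the cone condition as ``a standard convexity argument.'' Your invocation of Lemma~\ref{ter} (and your flagging of the affine independence of the $o_i$'s needed for well-definedness) merely spells out details the paper leaves implicit --- indeed the paper itself writes out the same convexity argument, via Lemma~\ref{cla}, when it reuses this construction in the triangle case of Section~\ref{aff}.
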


\begin{proof}
Let $f:\cG \to \mathcal{B}$ be defined by 
$f( \sum_{i = 1}^{\kappa+1} \alpha_i o_i) = \alpha_i b_i$, 
where $\sum \alpha_i = 1$ and $\alpha_i \ge 0$. Necessarily $f(x) \neq 0$ for $x \in \cG$ for otherwise 
the $b_i$'s would be linearly dependent. Also, by a standard convexity argument $f(x) \in \cC(x)$, $x \in \cG$.
\end{proof}

\begin{lemma}
\label{l22}
Let $n=3$, $\dim\mathcal{B}=2$, and let $o_1$ and $o_2$ be vertices of $\cG$. Then there exist 
linearly independent vectors $\{ b_1, b_2 ~|~ b_i \in \cB \cap \mathcal{C}(o_i) \}$. 
Moreover, if $\cG = \conv \{ o_1, o_2 \}$, the answer to Problems~\ref{proa} and \ref{proc} is affirmative.
\end{lemma}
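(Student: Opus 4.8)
The plan is to prove the first assertion — existence of linearly independent $b_1\in\cB\cap\cC(o_1)$ and $b_2\in\cB\cap\cC(o_2)$ — and then read off the ``Moreover'' part for free: since $\dim\cG=1$ forces $\cG=\conv\{o_1,o_2\}$, Lemma~\ref{l12} with $\kappa+1=2$ turns any such linearly independent pair into an affirmative answer to Problems~\ref{proa} and \ref{proc}. So the entire task is to produce $b_1,b_2$.

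First I would reduce the geometry. If $v_0\in\cG$ then Lemma~\ref{tec} already settles everything, so I assume $v_0\notin\cG$. Under this assumption each vertex $o_i$ of $\cG=\cS\cap\cO$ lies on $\partial\cS$ but is distinct from $v_0$; running through the possible positions (relative interior of a facet, relative interior of an edge, or a vertex $v_m$ with $m\neq0$) shows that $\{1,\dots,n\}\setminus I(o_i)$ has at most two elements. Hence $\cC(o_i)$ is the intersection of at most two halfspaces with normals among $h_1,h_2,h_3$, and in particular it is not pointed; the three-constraint simplicial cone occurs only at $v_0$, which we have excluded.

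Next I would establish nontriviality, $\cB\cap\cC(o_i)\neq\Zero$. Restricting the functionals $y\mapsto h_j\cdot y$ to the two-dimensional plane $\cB$ turns $\cB\cap\cC(o_i)$ into the intersection of at most two halfplanes through the origin of $\cB$, and such an intersection is always a nonzero convex cone (at least a line). This yields a nonzero $b_i$ for each $i$.

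The main obstacle is the linear independence. Having chosen nonzero $b_i\in K_i:=\cB\cap\cC(o_i)$, the construction can only fail if every admissible pair is collinear, i.e.\ if $K_1$ and $K_2$ both collapse to one and the same line $\ell\subseteq\cB$. Now a single $K_i$ is one-dimensional only when $o_i$ carries two constraints whose restrictions to $\cB$ are antiparallel, in which case $\ell$ is their common zero line. I would then rule out two distinct vertices sharing such an $\ell$. If their constraint index sets differ, then together they force all three restricted normals $h_1|_{\cB},h_2|_{\cB},h_3|_{\cB}$ to have the common kernel $\ell$, hence to be pairwise parallel; this puts $h_1,h_2,h_3$ in a single two-dimensional subspace of $\RR^3$, contradicting their linear independence as facet normals of a $3$-simplex. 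If instead their constraint index sets coincide, then both vertices lie on the same edge $\cF_p\cap\cF_q$, which passes through $v_0$; two distinct points of $\cG$ on this edge would place the whole edge-line in $\cO$ and hence $v_0\in\cG$, again excluded. With the degenerate same-line case eliminated, linearly independent $b_1,b_2$ always exist, and the proof concludes via Lemma~\ref{l12} as in the first paragraph.
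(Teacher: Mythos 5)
Your proposal is correct, and at the decisive step it takes a genuinely different route from the paper, so a comparison is in order. The paper's proof is computational: writing $\cB=\Ker(M^T)$, it stacks the two active normals and $M^T$ into a matrix $H$ and splits on $\rank(H)$; when $\rank(H)=3$ it solves explicitly for two independent vectors of $\cB\cap\cC(o_i)$, and when $\rank(H)=2$ it exhibits a concrete nonzero element of $\cB\cap\cC(o_i)$, namely the direction of the edge $\cF_p\cap\cF_q$ (that is, $v_3$, resp.\ $v_2$, with $v_0$ taken as origin), so that if both cones are degenerate the two edge directions are automatically independent. You instead restrict the normals to $\cB$ and argue by contradiction: failure of independence forces $\cB\cap\cC(o_1)$ and $\cB\cap\cC(o_2)$ to collapse to one common line $\ell$, and when the two constraint index sets differ this makes $h_1|_{\cB}$, $h_2|_{\cB}$, $h_3|_{\cB}$ all vanish on $\ell$, confining $h_1,h_2,h_3$ to a two-dimensional subspace and contradicting their linear independence as normals of a nondegenerate simplex. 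Your antiparallel-restriction dichotomy is the paper's rank dichotomy in disguise, but your conclusion is nonconstructive where the paper's produces explicit vectors. In exchange, your argument is more careful on two boundary cases the paper passes over silently: the case where $o_1$ and $o_2$ lie on the same edge through $v_0$ (which your affineness-of-$\cO$ argument excludes, while the paper's ``without loss of generality'' simply assumes the edges are distinct), and the implicit hypothesis $o_i\neq v_0$, which you surface by reducing to $v_0\notin\cG$. The one imprecision is your claim that Lemma~\ref{tec} then ``settles everything'': it settles Problems~\ref{proa} and \ref{proc}, but not the lemma's first assertion, which can genuinely fail when $o_i=v_0$ (a two-dimensional $\cB$ may meet the pointed three-constraint cone $\cC(v_0)$ only at the origin); so, exactly like the paper's proof, yours establishes the lemma under its intended implicit hypothesis that neither $o_i$ equals $v_0$.
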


\begin{proof}
First we assume $o_1 \in \ri(\cF_i)$ for some $i \in \{ 0,1,2,3\}$. By the definition of $\cC(o_1)$, it is a 
closed half space or $\RR^3$, so there exist linearly independent vectors $b_{11}, b_{12} \in \cB \cap \cC(o_1)$. 
We claim $\cB \cap \cC(o_2) \neq \Zero$. If $o_2 \in \ri(\cF_i)$ for some $i \in \{ 0,1,2,3\}$ then the argument above
proves the claim. Instead, assume w.l.o.g. that $o_2 \in \mathcal{F}_1 \cap \mathcal{F}_2$. Then 
$\mathcal{C}(o_2) = \{y\in\mathbb{R}^3|h_1\cdot y\leq 0, h_2\cdot y\leq 0\}$. Let $\cB = \Ker(M^T)$ for some 
$M \in \RR^{m \times n}$. Finding $0 \neq y \cB \cap \cC(o_2)$ is equivalent to solving 
\begin{equation}
\label{eq1}
\left[
\begin{array}{c}
h_1^T \\
h_2^T \\
\Alpha^T \end{array} \right]y=\left[\begin{array}{c}
s_1 \\
s_2 \\
0 \end{array} 
\right]\end{equation}
where $s_1,s_2 \in \mathbb{R}_0^-$ are unknown and $y\neq 0$. Because $\{ h_1, h_2 \}$ are linearly independent,
$\rank(H) \ge 2$. If $\rank(H) = 3$, then let
\[
[ y_1 ~~y_2 ] = H^{-1}
\left[\begin{array}{cc}
-1 &  0 \\
 0 & -1 \\
 0 &  0 
\end{array} 
\right] \,.
\]
Since $(-1,0,0)$ and $(0,-1,0)$ are linearly independent, $y_1$ and $y_2$ are linearly independent as well.

Next, assume $\rank(H) = 2$. In other words, $\Alpha=c_1h_1+c_2h_2$ for some $c_1,c_2\in\mathbb{R}$. Then, by taking $s_1=s_2=0$, equation \eqref{eq1} reduces to $$\left[\begin{array}{c}
h_1^T \\
h_2^T \\
\end{array} \right]y=0\textrm{.}$$
Now, by the rank-nullity theorem, the dimension of the kernel of the $2\times 3$ matrix on the left is $1$. Thus, there exists a nontrivial $y$ satisfying this equation. We make note of the fact that, if we take $v_0$ to be the origin, such a $y$ satisfies $y\in\mathcal{F}_1\cap\mathcal{F}_2=\mathrm{co}\{v_0,v_3\}$. In fact, since we can take any $y$ in this intersection of planes, we can take $y=v_3$.

We are now almost done. We assumed that $o_1$ is in the interior of one of the facets (or on the edges of $\mathcal{F}_0$, excluding the vertices). We proved that then there exist linearly independent $b_{11},b_{12}\in\mathcal{C}(o_1)$. We also proved that there exists a nontrivial $b_2\in\mathcal{C}(o_2)$. We claim that at least one of the pairs $\{b_{11},b_2\}$ and $\{b_{12},b_2\}$ will be linearly independent. Otherwise, $b_{11}$ is a scalar multiple of $b_2$, and $b_2$ is a scalar multiple of $b_{12}$. This is a contradiction with $b_{11}$ and $b_{12}$ being linearly independent. Thus, we have indeed found a linearly independent pair $b_1\in\mathcal{C}(o_1)$ and $b_2\in\mathcal{C}(o_2)$.

Let us now assume that neither $o_1$ nor $o_2$ are on the facet interiors (nor on the edges of $\mathcal{F}_0$, excluding the vertices). So, without loss of generality, $o_1\in\mathcal{F}_1\cap\mathcal{F}_2$ and $o_2\in\mathcal{F}_1\cap\mathcal{F}_3$. By the computations from several paragraphs above, we have shown that either there exist two linearly independent vectors in $\mathcal{C}(o_1)$ or $v_3\in\mathcal{C}(o_1)\textrm{.}$ Analogously, there either exist two linearly 
independent vectors in $\mathcal{C}(o_2)$ or $v_2\in\mathcal{C}(o_2)\textrm{.}$ Now, in the case there either exist two linearly independent vectors in $\mathcal{C}(o_1)$ or in $\mathcal{C}(o_2)$, the procedure in the previous paragraph generates a linearly independent pair of vectors, one in each of the cones.

In the remaining case, $v_3\in\mathcal{C}(o_1)$ and $v_2\in\mathcal{C}(o_2)$. Since $v_3$ and $v_2$ are obviously linearly independent, we again found our required pair of linearly independent vectors.
Finally, if $\cG = \conv \{ o_1, o_2 \}$, then by Lemma \ref{l12} the answer to Problem~\ref{proa} and Problem~\ref{proc} is affirmative.
\end{proof}

This answers Problem \ref{proa} whenever $\dim\cOS=1$.

If $\dim\cOS=2$ and $\dim\mathcal{B}=1$, the matter is clear: by the same argument in \cite{SemBro14}, which invokes the Intermediate Value Theorem, the vectors assigned at the segment between any two points $B$ need to be positive multiples of each other. Thus, all the cones $\mathcal{C}(x)$ for $x\in\cOS$ need to be the same. Hence, by Lemma \ref{suf} and Lemma \ref{nec}, the answer to Problem \ref{proa} is affirmative if and only if $\mathrm{cone}(\cOS)\cap\mathcal{B}\neq\Zero$.

\subsubsection{$\dim\cOS=2$ and $\dim\mathcal{B}=2$}
We assume that $v_0\not\in\cOS$, for that case has been settled by Lemma \ref{tec}. We also assume that $\mathrm{cone}(\cOS)\cap\mathcal{B}=\Zero$. Otherwise, we are done by Lemma \ref{suf}. Now, let us observe what $\cOS$ can look like. As given by the formula for product of simplices in \cite{Ket08}, $\cOS$ can either be a product of a $2$-simplex and a $0$-simplex, i.e., a triangle, or a product of two $1$-simplices, i.e., a quadrilateral. We also must allow for $\cOS$ passing through one of the vertices of $\mathcal{S}$, resulting in a triangle (essentially, a degenerated quadrilateral).

\paragraph{$\cOS$ is a triangle}
First, let us assume that $\cOS$ satisfies $o_i\in\left(v_0,v_i\right]$ for all $i=1,2,3$. Then, Theorem \ref{thm:symmetric} provides a solution to both Problem~\ref{proc} and Problem \ref{proa}. This theorem was also stated and proved in \cite{OrnBro15a}, but we provide both the statement and the proof for the benefit of the reader.

We first make note of a variant of Sperner's lemma from \cite{TALMAN}. The same variant was previously used in \cite{Bro10}.

\begin{lemma}
\label{lem:SpernerII}
Let $\cP = \conv \{ w_1,\ldots,w_{n+1} \}$ be an $n$-dimensional simplex.
Let $\{ \cQ_1, \ldots, \cQ_{n+1} \}$ be a collection of sets covering $\cP$ such that
\begin{itemize}
\item[(P1)]
Vertex $w_i \in \cQ_i$ and $w_i \not\in \cQ_j$ for $j \neq i$.
\item[(P2)]
If w.l.o.g. $x \in \conv \{ w_1, \ldots, w_l \}$ for some $1 \le l \le n+1$, then 
$x \in \cQ_1 \cup \cdots \cup \cQ_l$.
\end{itemize}
Then $\bigcap_{1 = 1}^{n+1} \ol{\cQ}_i \neq \emptyset$.
\end{lemma}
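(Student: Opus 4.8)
The plan is to prove this variant of Sperner's lemma by reducing it to the classical Sperner lemma (or the KKM lemma, to which it is equivalent). The statement asserts that under a covering satisfying the boundary condition (P2) and the vertex-separation condition (P1), the closures of the sets have a common point. Since this is cited from \cite{TALMAN} and \cite{Bro10}, I would first check whether a direct citation suffices, but to give a self-contained argument I would derive it from the KKM (Knaster--Kuratowski--Mazurkiewicz) lemma, which is the cleanest tool here.

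First I would recall the KKM lemma: if $\cP = \conv\{w_1,\ldots,w_{n+1}\}$ and $\{C_1,\ldots,C_{n+1}\}$ are closed sets with the KKM property that $\conv\{w_{i_1},\ldots,w_{i_k}\} \subseteq C_{i_1}\cup\cdots\cup C_{i_k}$ for every subset of indices, then $\bigcap_{i=1}^{n+1} C_i \neq \emptyset$. My strategy is to set $C_i = \ol{\cQ}_i$ and verify the KKM covering hypothesis from (P2). Specifically, taking a face $\conv\{w_{i_1},\ldots,w_{i_k}\}$, condition (P2) (applied after relabeling, which is permitted by the ``w.l.o.g.'') guarantees every point of that face lies in $\cQ_{i_1}\cup\cdots\cup\cQ_{i_k}$, hence a fortiori in $\ol{\cQ}_{i_1}\cup\cdots\cup\ol{\cQ}_{i_k}$. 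This is precisely the KKM closed-covering condition, so KKM yields $\bigcap_{i=1}^{n+1}\ol{\cQ}_i \neq \emptyset$, which is the conclusion. Condition (P1) is then not strictly needed for existence of a common point, but it rules out degeneracies and is what makes the labeling in (P2) consistent; I would remark on its role rather than invoke it essentially.

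If instead I wanted a combinatorial proof mirroring Sperner directly (which is likely what \cite{TALMAN} does), I would triangulate $\cP$ finely, assign to each triangulation vertex $v$ a label $\ell(v) \in \{1,\ldots,n+1\}$ chosen so that $v \in \cQ_{\ell(v)}$ (possible since the $\cQ_i$ cover $\cP$), and check that (P2) forces this labeling to be a proper Sperner labeling: a vertex on the face $\conv\{w_{i_1},\ldots,w_{i_k}\}$ receives a label in $\{i_1,\ldots,i_k\}$, and (P1) pins the labels of the $w_i$ themselves. Classical Sperner then produces a fully-labeled (``rainbow'') simplex in every triangulation; taking a sequence of triangulations with mesh tending to zero and extracting a convergent subsequence of rainbow simplices gives a limit point that, by closedness, lies in every $\ol{\cQ}_i$.

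The main obstacle is the bookkeeping around the covering-versus-closure distinction and the ``w.l.o.g.'' quantifier in (P2): the sets $\cQ_i$ need not be closed, so the Sperner labeling must be built on the $\cQ_i$ (which genuinely cover) while the conclusion is asserted only for the closures $\ol{\cQ}_i$, and the passage from a rainbow simplex in each triangulation to a single limit point must use a compactness argument together with closedness of the $\ol{\cQ}_i$. Reducing to KKM sidesteps most of this, so I expect the cleanest route is the KKM reduction, with the only real care needed in confirming that the face-covering hypothesis of KKM follows uniformly over all index subsets from (P2).
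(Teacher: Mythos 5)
Your proposal is correct, but there is nothing in the paper to compare it against: the paper does not prove Lemma~\ref{lem:SpernerII} at all. It states the lemma as a known intersection theorem, citing \cite{TALMAN} for the result and \cite{Bro10} for its earlier use in reach control, and then applies it in the proof of Theorem~\ref{thm:symmetric}. Your KKM reduction is a valid self-contained substitute for that citation, and it is the cleanest available route: setting $C_i = \ol{\cQ}_i$, condition (P2) — applied to each index subset via the permitted relabeling — is exactly the KKM face-covering hypothesis, taking closures only enlarges the sets, and the KKM lemma delivers $\bigcap_{i=1}^{n+1}\ol{\cQ}_i \neq \emptyset$. Your side observation is also accurate: (P1) plays no role in this direction, and even the standing hypothesis that the $\cQ_i$ cover $\cP$ is redundant, since (P2) with $l = n+1$ already gives it; (P1) matters for the sharper forms of Talman's intersection theorems, not for bare nonemptiness of the intersection of closures. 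Your fallback combinatorial argument (Sperner labeling on a sequence of triangulations with mesh tending to zero, plus compactness to extract a limit of rainbow simplices) is the classical elementary path and is also correct as sketched, though it carries exactly the bookkeeping burden you identify, which is why the KKM route is preferable. In short: the paper buys brevity by citation; you buy self-containedness at the cost of invoking KKM, which is a standard and legitimate tool here.
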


\begin{theorem}
\label{thm:symmetric}
Let $n=3$ and suppose $\cG = \conv \{ o_1, o_2, o_3 \}$ with $v_0 \not\in \cG$
and $o_i \in (v_0,v_i]$, $i = 1, 2, 3$.
The answer to Problem~\ref{proc} and Problem~\ref{proa} is affirmative if and only if 
$\cB \cap \cone(\cG) \neq \Zero$.
\end{theorem}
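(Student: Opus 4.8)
The forward (sufficiency) direction is immediate: if $\cB \cap \cone(\cG) \neq \Zero$, then Lemma \ref{suf} already produces a constant feedback solving both problems. The plan for the converse (necessity) is to assume such an $f$ exists---continuous for Problem \ref{proc}, and affine (hence continuous) for Problem \ref{proa}---and to run the Sperner-type Lemma \ref{lem:SpernerII} on the triangle $\cG = \conv\{o_1,o_2,o_3\}$ so as to manufacture a nonzero element of $\cB \cap \cone(\cG)$. Arguing by contradiction, I would assume throughout that $\cB \cap \cone(\cG) = \Zero$ and aim to violate it.

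First I would pin down the cone structure on $\cG$. Since $0 \notin \{1,2,3\}$, the Remark after Lemma \ref{cla} shows that the cones are insensitive to whether $v_0$ is absorbed into the index set, so the hypothesis $o_i \in (v_0,v_i]$ yields $\mathcal{C}(o_i) = \{y : h_j \cdot y \le 0 \text{ for all } j \neq i\}$ at each vertex, $\mathcal{C}(x) = \{y : h_k \cdot y \le 0\}$ on the edge opposite $o_k$ (i.e. the edge $o_i o_j$ with $\{i,j,k\} = \{1,2,3\}$), and $\mathcal{C}(x) = \RR^3$ in the relative interior. Intersecting, and invoking Lemma \ref{ter} to discard interior points, gives $\cone(\cG) = \mathcal{C}(v_0) = \{y : h_i \cdot y \le 0,\ i = 1,2,3\}$.

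Next I would introduce the candidate cover $\cQ_i = \{x \in \cG : h_i \cdot f(x) > 0\}$, $i = 1,2,3$. The single fact driving both Sperner hypotheses is that $f(x) \in \cB \setminus \{0\}$ forces $f(x)$ out of $\cone(\cG)$: if every $h_i \cdot f(x) \le 0$, then $f(x) \in \cB \cap \cone(\cG) = \Zero$, a contradiction. This gives covering and hypothesis (P2): on the edge opposite $o_k$ the cone constraint already gives $h_k \cdot f(x) \le 0$, so some $h_i \cdot f(x) > 0$ with $i \neq k$, placing $x$ in one of the two endpoint sets; and at a vertex $o_i$ the constraint $f(o_i) \in \mathcal{C}(o_i)$ gives $h_j \cdot f(o_i) \le 0$ for $j \neq i$, forcing $h_i \cdot f(o_i) > 0$, i.e. $o_i \in \cQ_i$. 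The same computation gives (P1), since $f(o_j) \in \mathcal{C}(o_j)$ yields $h_i \cdot f(o_j) \le 0$ for $i \neq j$, so $o_j \notin \cQ_i$.

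Lemma \ref{lem:SpernerII} then supplies a point $x^\star \in \bigcap_{i=1}^3 \ol{\cQ}_i$. Because $f$ is continuous, $\ol{\cQ}_i \subseteq \{x : h_i \cdot f(x) \ge 0\}$, so $h_i \cdot f(x^\star) \ge 0$ for all $i$; since $\cB$ is a linear subspace, $b := -f(x^\star) \in \cB$ is nonzero and satisfies $h_i \cdot b \le 0$ for every $i$, whence $b \in \cB \cap \cone(\cG)$, contradicting the assumption and establishing necessity. The step I expect to require the most care is the sign bookkeeping: the cover must be built from the \emph{violated} ($>0$) inequalities for the vertex-labelling (P1) to hold, and it is precisely the linearity of $\cB$ that lets the recovered vector $-f(x^\star)$ flip back into $\cone(\cG)$; continuity of $f$ (automatic in the affine case) is what legitimizes passing to closures in Sperner's conclusion.
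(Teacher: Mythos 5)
Your proposal is correct and follows essentially the same route as the paper's own proof: sufficiency via Lemma \ref{suf}, and necessity by contradiction using the cover $\cQ_i = \{x \in \cG \,:\, h_i \cdot f(x) > 0\}$, verification of (P1)--(P2) from the cone conditions at vertices and edges, and Lemma \ref{lem:SpernerII} to extract $x^\star$ with $-f(x^\star) \in \cB \cap \cone(\cG)$. Your explicit remark that continuity of $f$ justifies $\ol{\cQ}_i \subseteq \{x : h_i \cdot f(x) \ge 0\}$ is a small point the paper leaves implicit, but the argument is otherwise identical.
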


\begin{proof}
Sufficiency is provided by Lemma~\ref{suf}. For necessity, 
suppose there exists $f : \cG \rightarrow \cB \setminus \{ 0 \}$ such that $f(x) \in \cC(x)$, $x \in \cG$. 
By way of contradiction suppose $\cB \cap \cone(\cG) = \Zero$. 
Since $o_i \in (v_0,v_i]$, $i = 1, 2, 3$, we have 
\[
\cone(\cG) = \{ y \in \RR^n ~|~ h_j \cdot y \le 0 \,, j = 1, 2, 3 \} \,.
\]
Define the sets
\begin{equation}
\cQ_i := \{ x \in \cG ~|~ h_i \cdot f(x) > 0 \} \,, \quad i = 1, 2, 3 \,. 
\end{equation}
Now we verify the conditions of Lemma~\ref{lem:SpernerII}. 

Firstly, we claim that $\{ \cQ_i \}$ cover $\cG$. For suppose not. Then there exists $x \in \cG$ such that 
$h_j \cdot f(x) \le 0$, $j = 1, 2, 3$. Hence $f(x) \in \cB \cap \cone(\cG)$, so $f(x) = 0$, a contradiction 
to $f$ being non-vanishing on $\cG$.

Secondly, we verify property (P1). We claim that $o_i \in \cQ_i$ for $i = 1, 2, 3$. 
For suppose not. Then $h_i \cdot f(x) \le 0$. Additionally, because $f(o_i) \in \cC(o_i)$, 
$h_j \cdot f(x) \le 0$, $j \in \{ 1, 2, 3 \} \setminus \{ i \}$. We conclude 
$f(o_i) \in \cB \cap \cone(\cG)$, so $f(o_i) = 0$, a contradiction. 
Next we claim $o_i \not\in \cQ_j$, $j \neq i$. This is immediate since $f(o_i) \in \cC(o_i)$ implies
$h_j \cdot f(o_i) \le 0$, $j \neq i$. 

Thirdly, we verify property (P2). Suppose w.l.o.g. (by reordering the indices $\{ 1,2,3 \}$)
$x \in \conv \{ o_1,\ldots,o_r \}$ for some $1 \le r \le 3$. We claim $x \in \cQ_1 \cup \cdots \cup \cQ_r$.
For suppose not. Then $h_j \cdot f(x) \le 0$, $j = 1,\ldots,r$. Also, it is easily verified that
$\cC(x) = \{ y \in \RR^n ~|~ h_j \cdot y \le 0 \,, j = r+1,\ldots, 3 \}$. Thus, $h_j \cdot f(x) \le 0$, 
$j = r+1,\ldots,3$. 
Hence, $f(x) \in \cB \cap \cone(\cG)$, so $f(x) = 0$, a contradiction to $f$ being non-vanishing on $\cG$.

We have verified (P1)-(P2) of Lemma~\ref{lem:SpernerII}. Applying the lemma, there exists
$\ol{x} \in \bigcap_{i=1}^3 \ol{\cQ}_i$; that is, $h_j \cdot f(\ol{x}) \geq 0$, $j = 1,2,3$.
We conclude that $-f(\ol{x}) \in \cB \cap \cone(\cG)$, so $f(\ol{x}) = 0$, a contradiction. 
\end{proof}

From now on, we can assume that $\cOS$ is not a triangle satisfying the conditions of Theorem \ref{thm:symmetric}. Thus, if $\cOS$ is a triangle, by the discussion of simplicial products in \cite{Ket08}, it can either pass through one of the vertices of $\mathcal{S}$, or have all its vertices on the edges of $\mathcal{S}$ which connect a single vertex, say $v_1$, to the others.

In the latter case, say those vertices are $o_1\in\mathrm{co}\{v_1,v_2\}$, $o_2\in\mathrm{co}\{v_0,v_1\}$, $o_3\in\mathrm{co}\{v_1,v_3\}$. We assumed above that $\cOS$ does not pass through any of vertices $v_0$. Thus, by Lemma \ref{cla}, we note that the cones of all these three vertices (and hence of any point in $\cOS$, as the convex combination of $o_1$, $o_2$ and $o_3$ will have $v_1$ in its expansion in terms of vertices of $\mathcal{S}$) are subsets of $\mathcal{C}(o_1)$. Thus, $\mathrm{cone}(\cOS)=\mathcal{C}(o_1)$ and hence by Lemma \ref{suf} and Lemma \ref{nec}, the answer to Problem \ref{proa} is affirmative if and only if $$\mathcal{C}(o_1)\cap\mathcal{B}=\mathrm{cone}(\cOS)\cap\mathcal{B}\neq\Zero\textrm{.}$$

In the case where $\cOS$ passes through one of the vertices of $\mathcal{S}$, say without loss of generality that $o_1=v_1$, $o_2\in\mathrm{co}\{v_0,v_2\}$ and $o_3\in\mathrm{co}\{v_2,v_3\}$ (where neither $o_2$ nor $o_3$ coincide with any $v_i$'s). Now, as $\dim\mathcal{B}=2$, we know by Lemma \ref{l22} that there exist linearly independent vectors $b_1\in\mathcal{B}\cap\mathcal{C}(o_1)$ and $b_2\in\mathcal{B}\cap\mathcal{C}(o_2)$. Now, define $f:\cOS\to\mathcal{B}$ by $f(x)=Ax$, where $Ao_1=b_1$, $Ao_2=b_2$ and $Ao_3=b_2$. We first note that $o_1$, $o_2$ and $o_3$ are linearly independent. Thus, the above assignment can be accomplished. Next, we note that $\mathcal{C}(o_2)\subseteq\mathcal{C}(o_3)$ by Lemma \ref{cla}. Thus, the assignments on the vertices of $\cOS$ satisfy the cone condition.

Now, let us write any $x\in\cOS$ as $x=\alpha_1o_1+\alpha_2o_2+\alpha_3o_3$, where $\alpha_1+\alpha_2+\alpha_3=1$ and $\alpha_i$'s are nonnegative. We note that if $\alpha_i\neq 0$, $I(x)\supseteq I(o_i)$. Thus, by Lemma $\ref{cla}$, $\mathcal{C}(x)\supseteq \mathcal{C}(o_i)$. As $\mathcal{C}(x)$ is convex, then $\mathcal{C}(x)\supseteq \mathrm{co}\{\mathcal{C}(o_i):\alpha_i\neq 0\}$.  On the other hand, $Ax=\alpha_1Ao_1+\alpha_2Ao_2+\alpha_3Ao_3$. Thus, as we have proved that for every $i$, $Ao_i\in\mathcal{C}(o_i)$ $Ax$ is a convex combination of vectors from $\mathcal{C}(o_i)$, where $\alpha_i\neq 0$. We noted above that this means $Ax\in \mathcal{C}(x)$.

Note that the above proof works for any affine function: if it satisfies the cone criteria on the vertices, it will satisfy those criteria on the rest of $\cOS$ as well.

Finally, we note the following: with the above notation, $$f(x)=Ax=\alpha_1Ao_1+\alpha_2Ao_2+\alpha_3Ao_3=\alpha_1b_1+(\alpha_2+\alpha_3)b_2\textrm{.}$$ As $b_1$ and $b_2$ are linearly independent and $\alpha_1$ and $\alpha_2+\alpha_3$ can not both be zero, $f(x)\neq 0$ for any $x\in\cOS$. We have thus given a constructive solution for Problem \ref{proa} in this case.

\paragraph{$\cOS$ is a quadrilateral}

Say without loss of generality that $o_1\in\mathrm{co}\{v_0,v_2\}$, $o_2\in\mathrm{co}\{v_0,v_3\}$, $o_3\in\mathrm{co}\{v_1,v_2\}$ and $o_4\in\mathrm{co}\{v_1,v_3\}$ (where none of $o_i$'s actually coincide with any $v_j$'s). Now, we again know by Lemma \ref{l22} that there exist linearly independent vectors $b_1\in\mathcal{B}\cap\mathcal{C}(o_1)$ and $b_2\in\mathcal{B}\cap\mathcal{C}(o_2)$.

Now, from the definition of a cone in \eqref{cone}, we know that $b_1\cdot h_3\leq 0$ and $b_2\cdot h_2\leq 0$. We distinguish between two cases: in the first one, without loss of generality, $b_2\cdot h_2<0$.

Since $\{o_1,o_2,o_3\}$ is a linearly independent set, we know that there exist unique coefficients $\alpha_i$ such that $$o_4=\sum_{i=1}^3 \alpha_io_i\textrm{.}$$ Furthermore, $\alpha_2>0$, as $o_4=\lambda_1v_1+\lambda_3v_3$, with $\lambda_3>0$, and $v_3\not\in\mathrm{co}\{o_1,o_3\}$. Now, let us define $f:\cOS\to\mathcal{B}$ by $f(x)=Ax$, where $Ao_1=\varepsilon b_1$, $Ao_2=b_2$, $Ao_3=\varepsilon b_1$ (we note that $\mathcal{C}(o_1)\subseteq\mathcal{C}(o_3)$ by Lemma \ref{cla}), and $$\varepsilon=\frac{-\alpha_2(b_2\cdot h_2)}{|2(\alpha_1+\alpha_3)(b_1\cdot h_2)|}\textrm{.}$$ (If $(\alpha_1+\alpha_3)(b_1\cdot h_2)=0$, let $\varepsilon=1$.) As $\{o_1,o_2,o_3\}$ is a linearly independent set in $\mathbb{R}^3$, $A$ is well-defined.

Now, we note that, since $o_4=\alpha_1o_1+\alpha_1o_1+\alpha_1o_1$, $Ao_4=\varepsilon(\alpha_1+\alpha_3)b_1+\alpha_2b_2$ and thus $$f(o_4)\cdot h_2=\pm\frac{\alpha_2}{2}(b_2\cdot h_2)+\alpha_2(b_2\cdot h_2)\leq\frac{\alpha_2}{2}(b_2\cdot h_2)<0\textrm{.}$$ (If $(\alpha_1+\alpha_3)(b_1\cdot h_2)=0$, then $f(o_4)\cdot h_2=\alpha_2(b_2\cdot h_2)<0$.) Thus, the vector assigned to $o_4$ is in $\mathcal{C}(o_4)$. Thus, $f$ satisfies the cone condition at all four vertices of $\cOS$. By the proof same as in the case of the triangle, since $f$ is affine, it hence satisfies the cone condition at any point of $\cOS$.

Finally, we note that for any $x\in\cOS$, $f(x)=Ax=A(\kappa_1o_1+\kappa_2o_2+\kappa_3o_3+\kappa_4o_4)$, where $\kappa_1+\kappa_2+\kappa_3+\kappa_4=1$, and all $\kappa_i$'s are nonnegative. Thus, $f(x)=(\kappa_1+\kappa_3+\kappa_4\alpha_1+\kappa_4\alpha_3)\varepsilon b_1+(\kappa_2+\kappa_4\alpha_2)b_2$. Since $b_1$ and $b_2$ are linearly independent, $f(x)=0$ is thus equivalent to $\kappa_1+\kappa_3+\kappa_4\alpha_1+\kappa_4\alpha_3=0$ and $\kappa_2+\kappa_4\alpha_2=0$. Since $\alpha_2>0$, and $\kappa_i$'s are nonnegative, the latter equation implies $\kappa_2=\kappa_4=0$. Hence, the first equation implies $\kappa_1+\kappa_3=0$, which implies $\kappa_1=\kappa_3=0$. As $\kappa_1+\kappa_2+\kappa_3+\kappa_4=1$, this is clearly impossible. Thus, $f$ is nowhere zero. We are done, having defined a function satisfying Problem \ref{proa} on $\cOS$.

Now, let us assume that $b_1\cdot h_3=b_2\cdot h_2=0$. If we remind ourselves that $o_1\in\mathrm{co}\{v_0,v_2\}$ and $o_2\in\mathrm{co}\{v_0,v_3\}$, we also know (from the definition of cones at $o_1$ and $o_2$) that $b_1\cdot h_1,b_2\cdot h_1\leq 0$. Finally, we can assume that $b_1\cdot h_2,b_2\cdot h_3>0$. Otherwise, we would have that $b_1$ or $b_2$ is in $\mathrm{cone}(\cOS)\cap\mathcal{B}$, which was solved by Lemma \ref{suf}.

Now, let us first assume that $b_1\cdot h_1<0$ or $b_2\cdot h_1<0$. Without loss of generality we choose the first option. In that case, let $b_1'=b_1-cb_2$, where $$c=\frac{b_1\cdot h_1}{2(b_2\cdot h_1)}>0\textrm{.}$$ (If $b_2\cdot h_1=0$, take $c=1$ instead.) Now, we note that $b_1'\cdot h_1=b_1\cdot h_1-cb_2\cdot h_1=\frac{1}{2}b_1\cdot h_1<0$. (If $b_2\cdot h_1=0$, $b_1'\cdot h_1=b_1\cdot h_1<0$.) Also, $b_1'\cdot h_3=b_1\cdot h_3-cb_2\cdot h_3=-cb_2\cdot h_3<0$. Thus, $b_1'$ is in $\mathcal{C}(o_1)$ and in $\mathcal{B}$ (as it is a linear combination of vectors in $\mathcal{B}$). Furthermore, $b_1'$ and $b_2$ are still linearly independent and we already established $b_1'\cdot h_3<0$. Now, since we have that $b_1'\cdot h_3$ is strictly negative, we can go a few paragraphs back, just using $b_1'$ and $b_2$ instead of $b_1$ and $b_2$ in order to find a constructive answer to Problem \ref{proa}.

Finally, we have $b_1\cdot h_1=0$ and $b_2\cdot h_1=0$. However, we also know from before that $b_1\cdot h_3=b_2\cdot h_2=0$, and that $b_1\cdot h_2,b_2\cdot h_3>0$. Now, let $b_1'=b_1-b_2$. Then, $b_1'\cdot h_1=0$ and $b_1'\cdot h_3=b_1\cdot h_3-b_2\cdot h_3=-b_2\cdot h_3<0$. Thus, again, $b_1'$ is in $\mathcal{C}(o_1)$ and in $\mathcal{B}$, $b_1'$ and $b_2$ are linearly independent and $b_1'\cdot h_3<0$. Again, as before, we can go a few paragraphs back and obtain a solution to Problem \ref{proa}.

We note that this long and drawn out affair proved the following:

\begin{theorem}
\label{main2}
Let $n=3$, $\dim{\cOS}=2$, $\dim{\mathcal{B}}=2$ and $v_0\not\in\cOS$. Assume that $\cOS$ does not satisfy the conditions of Theorem \ref{thm:symmetric}. Then, the answer to Problem \ref{proa} is affirmative.
\end{theorem}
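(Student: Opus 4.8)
The plan is to settle every case that survives the standing hypotheses either by the sufficient condition of Lemma~\ref{suf} or by writing down an explicit affine map, organizing the argument according to the combinatorial type of $\cOS$. First I would add the harmless reduction $\mathrm{cone}(\cOS)\cap\mathcal{B}=\Zero$: if this intersection were nontrivial a constant map solves Problem~\ref{proa} by Lemma~\ref{suf}. Since $\dim\cOS=2$ and $\cOS$ is combinatorially a product of simplices, the classification in \cite{Ket08} leaves only two shapes, a triangle or a quadrilateral. The symmetric triangle $o_i\in(v_0,v_i]$ is excluded by hypothesis through Theorem~\ref{thm:symmetric}, so the triangles that remain are those passing through a vertex of $\mathcal{S}$ and those whose three vertices lie on the edges joining a common vertex, say $v_1$, to the others.

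I would dispatch the triangle whose vertices all meet $v_1$ quickly: since $v_1$ enters the barycentric expansion of every point of $\cOS$, Lemma~\ref{cla} (together with Lemma~\ref{ter}, which reduces $\mathrm{cone}(\cOS)$ to an intersection over vertices) shows that $\mathrm{cone}(\cOS)$ is cut out by at most two of the normals $h_j$; a cone defined by two half-spaces in $\RR^3$ always meets the $2$-dimensional $\mathcal{B}$ nontrivially, so $\mathrm{cone}(\cOS)\cap\mathcal{B}\neq\Zero$ and Lemma~\ref{suf} applies (equivalently, this shape cannot arise under the standing reduction). For the triangle through a vertex, say $o_1=v_1$, I would use Lemma~\ref{l22} to extract linearly independent $b_1\in\mathcal{B}\cap\mathcal{C}(o_1)$ and $b_2\in\mathcal{B}\cap\mathcal{C}(o_2)$, then define the linear map $f(x)=Ax$ by $Ao_1=b_1$, $Ao_2=Ao_3=b_2$; the inclusion $\mathcal{C}(o_2)\subseteq\mathcal{C}(o_3)$ from Lemma~\ref{cla} validates the value at $o_3$, affineness together with convexity of the cones (again via Lemma~\ref{cla}) propagates the cone condition from the vertices to all of $\cOS$, and linear independence of $b_1,b_2$ forces the coefficients $\alpha_1$ and $\alpha_2+\alpha_3$ of $f(x)=\alpha_1 b_1+(\alpha_2+\alpha_3)b_2$ never to vanish simultaneously, so $f\neq 0$ on $\cOS$.

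The genuinely constructive part, and the step I expect to be the main obstacle, is the quadrilateral $o_1\in\mathrm{co}\{v_0,v_2\}$, $o_2\in\mathrm{co}\{v_0,v_3\}$, $o_3\in\mathrm{co}\{v_1,v_2\}$, $o_4\in\mathrm{co}\{v_1,v_3\}$, where $\mathrm{cone}(\cOS)$ is now a salient three-facet cone that may meet $\mathcal{B}$ only at the origin. Again Lemma~\ref{l22} supplies independent $b_1\in\mathcal{B}\cap\mathcal{C}(o_1)$ and $b_2\in\mathcal{B}\cap\mathcal{C}(o_2)$, but interpolating them across the four vertices need not respect the single constraint $h_2\cdot f(o_4)\le 0$ defining $\mathcal{C}(o_4)$. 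My strategy would be to set $Ao_1=Ao_3=\varepsilon b_1$ and $Ao_2=b_2$ (legitimate because $\mathcal{C}(o_1)\subseteq\mathcal{C}(o_3)$) and, writing $o_4=\sum_{i=1}^3\alpha_i o_i$, to choose $\varepsilon>0$ small enough that the strictly negative term $\alpha_2(b_2\cdot h_2)$ dominates the $b_1$ contribution in $h_2\cdot f(o_4)$. This succeeds directly whenever the relevant inner product, say $b_2\cdot h_2$, is strictly negative, using crucially that $\alpha_2>0$ (because $v_3$ appears in $o_4$ but in neither $o_1$ nor $o_3$); non-vanishing of $f$ then follows exactly as in the triangle case, again via $\alpha_2>0$ and independence of $b_1,b_2$.

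It remains to neutralize the degenerate alignments $b_1\cdot h_3=b_2\cdot h_2=0$, where the interpolation yields only a nonstrict inequality at $o_4$. Here, since neither $b_1$ nor $b_2$ may lie in $\mathrm{cone}(\cOS)\cap\mathcal{B}$ (else Lemma~\ref{suf} finishes), one necessarily has $b_1\cdot h_2>0$ and $b_2\cdot h_3>0$; I would then replace $b_1$ by $b_1'=b_1-c\,b_2$ with $c>0$ chosen so that $b_1'\cdot h_3<0$ while $b_1'$ stays in $\mathcal{B}\cap\mathcal{C}(o_1)$ and remains independent of $b_2$ (choosing $c$ from the values $b_i\cdot h_1$, with $c=1$ as a fallback when a denominator vanishes). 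With the strict inequality $b_1'\cdot h_3<0$ in hand, the $v_2\leftrightarrow v_3$ symmetry of the configuration reduces this to the strict construction already carried out, with the roles of the two constraints interchanged. The crux throughout is thus the simultaneous reconciliation of the inward-pointing constraint at the fourth vertex with global non-vanishing, achieved by the tuning of $\varepsilon$ and, in the degenerate branch, by the preliminary correction $b_1-c\,b_2$.
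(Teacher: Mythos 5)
Your proposal is correct and follows essentially the same route as the paper's proof: the same standing reduction $\mathrm{cone}(\cOS)\cap\cB=\Zero$ via Lemma \ref{suf}, the same triangle/quadrilateral classification from \cite{Ket08}, the same use of Lemma \ref{l22} to extract independent $b_1,b_2$, the same interpolation $Ao_1=Ao_3=\varepsilon b_1$, $Ao_2=b_2$ with $\alpha_2>0$ doing double duty (cone condition at $o_4$ and non-vanishing), and the same correction $b_1'=b_1-c\,b_2$ in the degenerate alignment $b_1\cdot h_3=b_2\cdot h_2=0$.

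One difference is worth recording, and it is in your favor. For the triangle whose three vertices lie on the edges through a common vertex $v_1$, the paper only states the conditional conclusion that Problem \ref{proa} is solvable if and only if $\mathrm{cone}(\cOS)\cap\cB\neq\Zero$; under the standing reduction $\mathrm{cone}(\cOS)\cap\cB=\Zero$, Lemma \ref{nec} would then yield a \emph{negative} answer in that case, contradicting the unconditional claim of the theorem --- unless that case is vacuous under the reduction, a point the paper never addresses. Your wedge argument supplies exactly the missing step: $\mathrm{cone}(\cOS)$ is cut out by at most two of the normals (since $v_1$ appears in $I(x)$ for every $x\in\cOS$), and a cone $\left\{y\in\RR^3 \mid h_2\cdot y\le 0,\ h_3\cdot y\le 0\right\}$ meets any two-dimensional subspace $\cB$ nontrivially --- take $0\neq z\in\cB\cap\left\{y \mid h_2\cdot y=0\right\}$, which exists by dimension count, and replace $z$ by $-z$ if $h_3\cdot z>0$. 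So this triangle cannot occur under the standing reduction, which closes a small gap the paper itself leaves open; otherwise the two proofs coincide.
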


Thus, we've answered the last remaining case.

\end{document}